\DeclareSymbolFont{AMSb}{U}{msb}{m}{n}
\newcommand{\mylabel}[2]{#2\def\@currentlabel{#2}\label{#1}}
\renewcommand\@biblabel[1]{#1.}
      \numberwithin{equation}{section}
\DeclareMathOperator{\opp}{op}
\DeclareMathOperator{\Sch}{Sch}
\DeclareSymbolFont{usualmathcal}{OMS}{cmsy}{m}{n}
\DeclareSymbolFontAlphabet{\mathcal}{usualmathcal}
\DeclareMathAlphabet\BCal{OMS}{cmsy}{b}{n}
\definecolor{cornellred}{rgb}{0.7, 0.11, 0.11}
\definecolor{britishracinggreen}{rgb}{0.0, 0.26, 0.15}
\definecolor{cobalt}{rgb}{0.0, 0.28, 0.67}
\newcommand{\bfk}{\mathbf{k}}
\newcommand{\ncHilb}{\mathrm{ncHilb}}
\newcommand{\ncQuot}{\mathrm{ncQuot}}
\newcommand{\BA}{{\mathbb{A}}}
\newcommand{\BC}{{\mathbb{C}}}
\newcommand{\BL}{{\mathbb{L}}}
\newcommand{\BP}{{\mathbb{P}}}
\newcommand{\BQ}{{\mathbb{Q}}}
\DeclareMathOperator{\Sets}{Sets}
\DeclareMathOperator{\Quot}{Quot}
\DeclareMathOperator{\Sym}{Sym}
\DeclareMathOperator{\Coh}{Coh}
\DeclareMathOperator{\Span}{Span}
\DeclareMathOperator{\vir}{\mathrm{vir}}
\DeclareMathOperator{\Var}{Var}
\DeclareMathOperator{\GL}{GL}
\DeclareMathOperator{\Mat}{Mat}
\DeclareMathOperator{\St}{St}
\newcommand{\into}{\hookrightarrow}
\newcommand{\onto}{\twoheadrightarrow}
\DeclareFontFamily{OT1}{rsfs}{}
\DeclareFontShape{OT1}{rsfs}{n}{it}{<-> rsfs10}{}
\DeclareMathAlphabet{\curly}{OT1}{rsfs}{n}{it}
\newcommand\End{\operatorname{End}}
\DeclareMathOperator{\lHom}{\mathscr{H}\kern-0.3em\mathit{om}}
\DeclareMathOperator{\RRlHom}{\mathbf{R}\kern-0.025em\mathscr{H}\kern-0.3em\mathit{om}}
\DeclareMathOperator{\lExt}{{\mathscr{E}\kern-0.2em\mathit{xt}}}
\DeclareMathOperator{\Rcrit}{\mathbf{R}\kern-0.0em\mathrm{crit}}
\newcommand\Spec{\operatorname{Spec}}
\newcommand\id{\operatorname{id}}
\newcommand{\HH}{\mathrm{H}}
\newcommand{\OO}{\mathscr O}
\tikzset{commutative diagrams/arrow style=math font}
\tikzset{commutative diagrams/.cd,
mysymbol/.style={start anchor=center,end anchor=center,draw=none}}
\tikzset{
shift up/.style={
to path={([yshift=#1]\tikztostart.east) -- ([yshift=#1]\tikztotarget.west) \tikztonodes}
}
}
\theoremstyle{definition}
\newtheorem*{lemma*}{Lemma}
\newtheorem*{theorem*}{Theorem}
\newtheorem*{example*}{Example}
\newtheorem*{fact*}{Fact}
\newtheorem*{notation*}{Notation}
\newtheorem*{definition*}{Definition}
\newtheorem*{prop*}{Proposition}
\newtheorem*{remark*}{Remark}
\newtheorem*{corollary*}{Corollary}
\newtheorem*{conventions*}{Conventions}
\newtheorem{definition}{Definition}[section]
\newtheorem{example}[definition]{Example}
\newtheorem{remark}[definition]{Remark}
\newtheoremstyle{thm} 
        {3mm}
        {3mm}
        {\slshape}
        {0mm}
        {\bfseries}
        {.}
        {1mm}
        {}
\theoremstyle{thm}
\newtheorem{theorem}[definition]{Theorem}
\newtheorem{lemma}[definition]{Lemma}
\newtheorem{prop}[definition]{Proposition}
\newtheoremstyle{ex} 
        {3mm}
        {3mm}
        {}
        {0mm}
        {\scshape}
        {.}
        {1mm}
        {}
\theoremstyle{ex}
\newtheoremstyle{sol} 
        {3mm}
        {3mm}
        {}
        {0mm}
        {\scshape}
        {.}
        {1mm}
        {}
\theoremstyle{sol}
\newtheorem*{Acknowledgments*}{Acknowledgments}
\title[Motivic classes of noncommutative Quot schemes]{Motivic classes of noncommutative Quot schemes}
\author{Andrea T. Ricolfi}
\begin{document}

\maketitle

\begin{abstract}
We provide a recursive formula for the motivic class of the noncommutative Quot scheme in the Grothendieck ring of stacks.
\end{abstract}


\section{Introduction}

\subsection{Overview}
We work over a fixed algebraically closed field $\bfk$. Fix integers $r,d>0$ and $n\geq 0$. Given an $n$-dimensional $\bfk$-vector space $V_n$, form the $(dn^2+rn)$-dimensional affine space
\[
\BA=\End_{\bfk}(V_n)^d \times V_n^r.
\]
The general linear group $\GL_n$ (a smooth algebraic group over $\bfk$) acts on $\BA$ by simultaneous conjugation on the matrices and in the natural way on the vectors. Consider the open subscheme $U^n_{r,d} \into \BA$ parametrising tuples $(T_1,\ldots,T_d,v_1,\ldots,v_r) \in \BA$ such that the $\bfk$-linear span of all monomials in $T_1,\ldots,T_d$ applied to $v_1,\ldots,v_r$ coincides with the whole $V_n$. Then $\GL_n$ acts \emph{freely} on $U^n_{r,d}$. The quotient
\[
\ncQuot^n_{r,d} = U^n_{r,d}/\GL_n
\]
is the \emph{noncommutative Quot scheme}. It is a smooth quasiprojective $\bfk$-variety of dimension $N_{n,r,d} = (d-1)n^2+rn$. See \Cref{sec:moduli-space} for further details on this space. 

Let $K_0(\Var_{\bfk})$ be the Grothendieck ring of $\bfk$-varieties, and $K_0(\St_{\bfk})$ the Grothendieck ring of $\bfk$-stacks \cite{EKStacks}. In this paper we compute the motive of $\ncQuot^n_{r,d}$, more precisely the image of the class $[\ncQuot^n_{r,d}] \in K_0(\Var_{\bfk})$ along the natural map $K_0(\Var_{\bfk}) \to K_0(\St_{\bfk})$.

\begin{theorem}\label{thm:main-thm}
Fix integers $r,d>0$ and $n\geq 0$. Then, there is an identity
\begin{equation}\label{eqn:main-formula}
[\ncQuot^n_{r,d}] = \frac{\BL^{dn^2}}{[\GL_n]}\left(\BL^{rn} - \sum_{k=0}^{n-1}\, [\ncQuot^k_{r,d}][\GL_k][G(k,n)]\BL^{-dnk} \right)
\end{equation}
in $K_0(\St_{\bfk})$, where $\BL = [\BA^1_{\bfk}]$ is the Lefschetz motive. Moreover, if $\bfk=\BC$, the generating functions
\[
\mathsf Z_{r,d}(t) = \sum_{n=0}^\infty\, \BL^{-N_{n,r,d}/2} [\ncQuot^{n}_{r,d}] (\BL^{(r-1)/2}t)^n \in K_0(\St_{\BC}) \llbracket t \rrbracket
\]
satisfy the functional equations
\begin{align*}
    \mathsf Z_{1,d}(t) &= 1+\BL^{d/2}t \prod_{i=0}^{d-1}\mathsf Z_{1,d}(\BL^it)\\
\mathsf Z_{r,d}(t) &= \prod_{i=0}^{r-1} \mathsf Z_{1,d}(\BL^{i}t).
\end{align*}
\end{theorem}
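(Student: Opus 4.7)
The plan is to prove the recursion \eqref{eqn:main-formula} first, then obtain the two generating function identities from additional convolution-type relations. In \emph{Step 1}, I use that the $\GL_n$-action on $U^n_{r,d}$ is free, so $[U^n_{r,d}]=[\GL_n]\cdot[\ncQuot^n_{r,d}]$, and I stratify $\BA$ by the dimension $k$ of the cyclic span of the tuple. For $k=n$ this is $U^n_{r,d}$; for $k<n$ the stratum fibres over $G(k,n)$, and over $W\in G(k,n)$ the fibre decomposes as a cyclic tuple on $W$ (a copy of $U^k_{r,d}$) together with the off-diagonal blocks $\Hom(V_n/W,W)\oplus\End(V_n/W)$ of each $T_j$, contributing $\BL^{d(k(n-k)+(n-k)^2)}=\BL^{dn(n-k)}$. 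Summing classes and solving for $[\ncQuot^n_{r,d}]$ yields \eqref{eqn:main-formula}.

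\emph{Step 2.} For the product formula I run a second stratification of $U^n_{r,d}$, this time by $k=\dim W_1$ where $W_1$ is the cyclic span of the first vector $v_1$ alone. The fibre over $W_1\in G(k,n)$ decomposes into the cyclic pair $(T_\bullet|_{W_1},v_1)\in U^k_{1,d}$, the induced cyclic $(r-1)$-tuple $(\bar T_\bullet,\bar v_2,\ldots,\bar v_r)\in U^{n-k}_{r-1,d}$, arbitrary lifts of $v_2,\ldots,v_r$ back to $V_n$ (contributing $\BL^{(r-1)k}$), and the off-diagonal blocks $\Hom(V_n/W_1,W_1)^{\oplus d}$ of the $T_j$'s (contributing $\BL^{dk(n-k)}$). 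Combining with $[G(k,n)][\GL_k][\GL_{n-k}]=[\GL_n]\BL^{-k(n-k)}$ collapses the total count to the convolution
\[
[\ncQuot^n_{r,d}]=\sum_{k=0}^{n}[\ncQuot^k_{1,d}]\,[\ncQuot^{n-k}_{r-1,d}]\,\BL^{(d-1)k(n-k)+(r-1)k}.
\]
Multiplying through by $\BL^{-N_{n,r,d}/2+(r-1)n/2}$ and checking that the resulting exponent splits additively (ultimately resting on $k(n-k)-n^2/2=-(k^2+(n-k)^2)/2$) translates this into $\mathsf Z_{r,d}(t)=\mathsf Z_{1,d}(\BL^{r-1}t)\cdot\mathsf Z_{r-1,d}(t)$; iteration on $r$ with base case $\mathsf Z_{0,d}(t)=1$ gives the product formula.

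\emph{Step 3.} The product formula at $r=d$ recasts the functional equation as $\mathsf Z_{1,d}(t)-1=\BL^{d/2}t\,\mathsf Z_{d,d}(t)$, which is equivalent (comparing $t^n$-coefficients) to $[\ncQuot^n_{1,d}]=\BL^{(d-1)n+1}\,[\ncQuot^{n-1}_{d,d}]$ for $n\ge 1$. To prove this I fix $v=e_1$ and realise $\ncQuot^n_{1,d}$ as the quotient of the slice $\{(T_\bullet,e_1)\text{ cyclic}\}\subset U^n_{1,d}$ by the stabiliser of $e_1$ in $\GL_n$, which has class $\BL^{n-1}[\GL_{n-1}]$. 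Writing each $T_i$ in block form $\left(\begin{smallmatrix}\alpha_i&\beta_i\\w_i&S_i\end{smallmatrix}\right)$ relative to $\bfk e_1\oplus\bfk^{n-1}$, one checks that cyclicity of $(T_\bullet,e_1)$ is equivalent to cyclicity of $(S_\bullet,w_\bullet)\in U^{n-1}_{d,d}$, while $\alpha_i,\beta_i$ remain unconstrained; hence the slice has class $\BL^{dn}[U^{n-1}_{d,d}]$, and dividing by $\BL^{n-1}[\GL_{n-1}]$ gives the desired identity.

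\emph{Main obstacle.} The most delicate step is the exponent bookkeeping in Step 2, where the bivariate exponent coming from the convolution must split additively as the sum of the exponents appearing in the two factors on the right-hand side. The stratifications themselves are geometrically clean; all the content sits in the subsequent algebraic rewriting.
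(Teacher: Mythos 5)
Your Step 1 is exactly the paper's proof of \eqref{eqn:main-formula}: stratify $\BA$ by the dimension of the cyclic span, fibre over the Grassmannian, split off the off-diagonal blocks, and divide by $[\GL_n]$.

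For the two functional equations you take a genuinely different route from the paper, and it works. The paper treats these as a corollary of Reineke's result \cite[Thm.~1.4]{Reineke_2005}: since each $\ncQuot^n_{r,d}$ has a cell decomposition, its motive is obtained from its Poincar\'e polynomial by evaluating at $\BL$ (after a Poincar\'e-duality normalisation), and Reineke's identities for $\overline\zeta^{(d)}_r(q,t)$ are then transcribed verbatim. You instead prove the identities directly in $K_0(\St_{\bfk})$ by two more stratifications. Your Step 2 stratification of $U^n_{r,d}$ by $k=\dim\Span_\bfk(T_\bullet,v_1)$ is correct: the fibre over $W_1\in G(k,V_n)$ is $U^k_{1,d}\times U^{n-k}_{r-1,d}\times\BA^{dk(n-k)}\times\BA^{(r-1)k}$ (the last factor being the lifts of $v_2,\dots,v_r$), and combining with $[G(k,n)][\GL_k][\GL_{n-k}]=\BL^{-k(n-k)}[\GL_n]$ yields the convolution $[\ncQuot^n_{r,d}]=\sum_k[\ncQuot^k_{1,d}][\ncQuot^{n-k}_{r-1,d}]\BL^{(r-1)k+(d-1)k(n-k)}$; the exponent bookkeeping $-n^2/2+k(n-k)=-(k^2+(n-k)^2)/2$ then gives $\mathsf Z_{r,d}(t)=\mathsf Z_{1,d}(\BL^{r-1}t)\mathsf Z_{r-1,d}(t)$, hence the product formula. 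Your Step 3 reduction of the remaining identity to $[\ncQuot^n_{1,d}]=\BL^{(d-1)n+1}[\ncQuot^{n-1}_{d,d}]$, and its proof via the slice $\{v=e_1\}$ with the block decomposition $T_i=\left(\begin{smallmatrix}\alpha_i&\beta_i\\ w_i&S_i\end{smallmatrix}\right)$, is also correct; the key observation that cyclicity of $(T_\bullet,e_1)$ is equivalent to cyclicity of $(S_\bullet,w_\bullet)$ follows because, modulo shorter monomials, the projection of $T_{i_m}\cdots T_{i_1}e_1$ to $\bfk^{n-1}$ equals $S_{i_m}\cdots S_{i_2}w_{i_1}$. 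This buys a uniform, purely motivic proof of the whole theorem, independent of the cell decomposition and Poincar\'e duality, and in particular reproves Reineke's functional equations at the level of $K_0(\St_\bfk)$; it thereby also removes the restriction $\bfk=\BC$ that the paper imposes for the functional equations. The one detail worth making explicit is that each of your span maps to the Grassmannian is Zariski locally trivial (the same point the paper handles via the footnote referencing \cite{BBS}), and that the stabiliser $\mathrm{Stab}(e_1)\subset\GL_n$ is a special group, so Rule \ref{rule_iii} applies to the slice quotient.
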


Note that \Cref{eqn:main-formula} is recursive but explicit, being determined by the $n=0$ motive, which is just the ring identity $1 = [\Spec \bfk]$ for any fixed $r$ and $d$; moreover, the motive of $\GL_n$ is the class 
\[
\left[\GL_n\right] = \prod_{i=0}^{n-1} \,\left(\BL^n-\BL^i\right) = \BL^{\binom{n}{2}}[n]_{\BL} \in K_0(\Var_{\bfk}),
\]
where we have set $[n]_x = \prod_{1\leq i\leq n}(x^i-1)$ for a formal variable $x$. Similarly, the motive of the Grassmannian $G(k,n)$ is 
\[
[G(k,n)] = \frac{[n]_{\BL}}{[k]_{\BL}[n-k]_{\BL}} \in K_0(\Var_{\bfk}).
\]
\Cref{eqn:main-formula} is proved in \Cref{sec:computation}. The functional equations in \Cref{thm:main-thm} are directly derived (cf.~\Cref{prop:functional-eqn}) from Reineke's functional equations for the Poincar\'e polynomial of $\ncQuot^n_{r,d}$, see \cite[Thm.~1.4]{Reineke_2005}.

\subsection{Related works}
If $r=1$, the space $\ncQuot^n_{1,d}$ is called the \emph{noncommutative Hilbert scheme}, introduced by Nori \cite{Nori1} and studied e.g.~by Larsen--Lunts \cite{NCHilb1}. If $d=3$, then the space $\ncQuot^n_{r,3}$ has a key role in Donaldson--Thomas theory and its (motivic, K-theoretic) refinements \cite{FMR_K-DT,cazzaniga2020higher}.
Grothendieck's Quot scheme $\Quot_{\BA^d}(\OO^{\oplus r},n)$, parametrising quotients $\OO^{\oplus r} \onto E$ such that $E \in \Coh(\BA^d)$ is a sheaf of finite support with $h^0(E)=n$, sits inside $\ncQuot^n_{r,d}$ as the locus cut out by the vanishing of the commutators $[T_i,T_j] = 0$.  For links to the context of moduli spaces of sheaves (or framed sheaves), see for instance \cite{Jardim} for the $r=1$ case and \cite{Henni_Quot,cazzaniga2020framed} for arbitrary $r$.

As already mentioned, a functional equation for the generating series of Poincar\'e polynomials of $\ncQuot^n_{r,d}$ was found by Reineke \cite{Reineke_2005}. See also \cite[Cor.~5.3]{Engel_2008} for a recursion, at least formally similar to our \Cref{eqn:main-formula}, for Poincar\'e polynomials of more general moduli spaces of stable quiver representations.

\section{The moduli space}\label{sec:moduli-space}
\subsection{The functor of points}
Fix an algebraically closed field $\bfk$, and integers $r,d>0$ and $n\geq 0$. Write $A=\bfk \braket{x_1,\ldots,x_d}$ for the free algebra on $d$ generators. Given a $\bfk$-scheme $B$, let $A_B$ denote the sheaf of $\OO_B$-algebras associated to the presheaf $A\otimes_{\bfk}\OO_B = \OO_B\braket{x_1,\ldots,x_d}$. Consider the functor $\mathsf F_{r,d}^n\colon \Sch_{\bfk}^{\opp} \to \Sets$ sending a $\bfk$-scheme $B$ to the set of equivalence classes of triples $(M,p,\beta)$ where
\begin{enumerate}
    \item $M$ is a left $A_B$-module, locally free of finite rank $n$ over $\OO_B$,
    \item $p\colon A_B^{\oplus r} \onto M$ is an $A_B$-linear epimorphism, and
    \item $\beta \subset \Gamma(B,M)$ is a basis of $M$ as an $\OO_B$-module.
\end{enumerate}
Two triples $(M,p,\beta)$ and $(M',p',\beta')$ are declared to be equivalent whenever there is a commutative diagram
\begin{equation}\label{diag:iso-class}
\begin{tikzcd}
A_B^{\oplus r} \arrow[equal]{d}\arrow[two heads]{r}{p} &
M\arrow{d}{\varphi} \\
A_B^{\oplus r}\arrow[two heads]{r}{p'} &
M'
\end{tikzcd}
\end{equation}
where $\varphi$ is an $\OO_B$-linear isomorphism sending $\beta$ onto $\beta'$.

There is another functor $\mathsf G_{r,d}^n\colon \Sch_{\bfk}^{\opp} \to \Sets$ sending $B$ to the set of equivalence classes of \emph{pairs} $(M,p)$ just as above, but where no basis of $\Gamma(B,M)$ is chosen, and no further condition is imposed besides the commutativity of Diagram \eqref{diag:iso-class}. Note that, by considering the kernel of a surjection, this functor parametrises \emph{left $A_B$-submodules} $K \subset A_B^{\oplus r}$ with locally free cokernel (and with just equality as equivalence relation).
As we shall see in the next subsection, both functors are representable.

\subsection{Definition of the noncommutative Quot scheme}
Fix a $\bfk$-vector space of dimension $n$, denoted $V_n$. Form the $(dn^2+rn)$-dimensional affine space
\begin{equation}\label{eqn:affine-space}
\BA = \End_{\bfk}(V_n)^d \times V_n^{r}.
\end{equation}
Define the \emph{span} of a point $x = (T_1,\ldots,T_d,v_1,\ldots,v_r) \in \BA$ to be the vector subspace
\[
\Span_{\bfk}(x) = \Span_{\bfk}\Set{T_1^{a_1}\cdots T_d^{a_d}\cdot v_j | a_i \geq 0, 1\leq j\leq r} \subset V_n.
\]
Let $U^n_{r,d} \into \BA$ be the open subset of points $x$ such that $\Span_{\bfk}(x) = V_n$. There is a $\GL_n$-action 
\[
g\cdot \bigl(T_1,\ldots,T_d,v_1,\ldots,v_r\bigr) = \left(gT_1g^{-1},\ldots,gT_dg^{-1},gv_1,\ldots,gv_r\right)
\]
on the whole of $\BA$, and $U^n_{r,d}$ agrees with the set of GIT stable points in $\BA$ with respect to the character $\det \colon \GL_n \to \bfk^\times$.

\begin{lemma}
The quotient map
\[
\begin{tikzpicture}
\node (a) at (0, 1.5)  {$U^n_{r,d}$};
\node (b) at (1.2, 0)    {$U^n_{r,d}/\GL_n = \BA \sslash_{\det} \GL_n$};
\node (c) at (0.2,0.8) {$\pi$};
\draw[->] (0, 1.2) -- (0, 0.3);
\end{tikzpicture}
\]
is a principal $\GL_n$-bundle. The GIT quotient $U^n_{r,d}/\GL_n$
is a smooth quasiprojective $\bfk$-variety of dimension $(d-1)n^2+rn$.
\end{lemma}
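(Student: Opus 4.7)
The plan is to reduce each assertion of the lemma to standard GIT facts once freeness of the $\GL_n$-action on $U^n_{r,d}$ is established. First I would verify freeness: if $g\in\GL_n$ fixes a tuple $(T_1,\ldots,T_d,v_1,\ldots,v_r)\in U^n_{r,d}$, then $g$ commutes with each $T_i$ and fixes each $v_j$, so by induction on word length $g$ fixes every monomial $T_1^{a_1}\cdots T_d^{a_d}v_j$; these span $V_n$ by definition of $U^n_{r,d}$, forcing $g=\id$. (The case $n=0$ is trivial, since $\GL_0$ is the trivial group.)

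Granted the identification of $U^n_{r,d}$ with the $\det$-stable locus stated in the text, freeness of the action implies that stability coincides with semi-stability on $U^n_{r,d}$. Standard GIT then yields $\BA\sslash_{\det}\GL_n$ as a geometric quotient of $U^n_{r,d}$, quasi-projective over the affine quotient $\Spec\bfk[\BA]^{\GL_n}$ and hence quasi-projective over $\bfk$. Moreover, a free action of a smooth affine algebraic group admitting a geometric quotient produces an étale-locally trivial $\GL_n$-torsor, and since $\GL_n$ is a special group in the sense of Serre, every such torsor is Zariski-locally trivial; this endows $\pi$ with the structure of a principal $\GL_n$-bundle.

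Smoothness of $\ncQuot^n_{r,d}$ then follows by smooth descent along $\pi$ from the smooth variety $U^n_{r,d}$, which is an open subscheme of the affine space $\BA$, and the dimension is computed by additivity: $\dim\ncQuot^n_{r,d}=\dim\BA-\dim\GL_n=(dn^2+rn)-n^2=(d-1)n^2+rn$. The only genuinely non-formal ingredient is the identification of the span condition with $\det$-stability (via King's criterion applied to the framed one-vertex $d$-loop quiver), which is already granted in the text; all remaining claims are immediate formal consequences, so I do not anticipate any serious obstacle beyond invoking the correct references.
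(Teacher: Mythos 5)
Your proposal is correct and follows essentially the same strategy as the paper: establish freeness of the $\GL_n$-action on $U^n_{r,d}$, invoke GIT for the geometric quotient and quasi-projectivity, deduce the principal-bundle structure, and then get smoothness by descent and the dimension by additivity. The one minor variation is that you pass from an \'etale-locally trivial torsor to a Zariski-locally trivial one using that $\GL_n$ is special, whereas the paper obtains local triviality directly from Luna's \'etale slice theorem; both are standard and equivalent here. Your freeness argument (that $\ker(g-\id)$ is $T_i$-stable and contains the $v_j$, hence equals $V_n$) is the correct one and is in fact stated more carefully than in the paper.
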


\begin{proof}
By general GIT, $\pi\colon U^n_{r,d}\to U^n_{r,d}/\GL_n$ is a geometric quotient.
In particular, every point $x \in U^n_{r,d}$ has closed $\GL_n$-orbit. We next observe that $x$ has trivial stabiliser. Indeed, if $g\in\GL_n$ fixes a point  $(T_1,\ldots,T_d,v_1,\ldots,v_r)\in U^n_{r,d}$, then $v_1,\ldots,v_r \in \ker(g-\id)\subset V_n$, but the smallest $\GL_n$-invariant subspace of $V_n$ containing $v_1,\ldots,v_r$ is $V_n$ itself. Thus $g = \id$. Thus, by an application of Luna's \'etale slice theorem (cf.~\cite[Cor.~4.2.13]{modulisheaves}), $\pi$ is a principal $\GL_n$-bundle in a neighbourhood of $\pi(x)$. Smoothness of $U^n_{r,d}/\GL_n$ now follows from smoothness of $U^n_{r,d}$ combined with smoothness and surjectivity of $\pi$. The dimension is easily computed as $\dim U^n_{r,d}-\dim \GL_n$.
\end{proof}

\begin{definition}
The geometric quotient $U^n_{r,d}/\GL_n$ is called the \emph{noncommutative Quot scheme}, and is denoted $\ncQuot^n_{r,d}$. When $r=1$, this is the more classical \emph{noncommutative Hilbert scheme}, and we write $\ncHilb^n_d$ instead of $\ncQuot^n_{1,d}$.
\end{definition}

\begin{prop}
The smooth quasiaffine scheme $U^n_{r,d}$ represents the functor $\mathsf F^n_{r,d}$, whereas the noncommutative Quot scheme $\ncQuot^n_{r,d}$ represents the functor $\mathsf G^n_{r,d}$.
\end{prop}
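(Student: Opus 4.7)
The plan is to handle the two representability claims in sequence, deriving the second from the first together with the torsor property of $\pi\colon U^n_{r,d}\to\ncQuot^n_{r,d}$ established in the preceding lemma.

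For the first claim, I would fix a $\bfk$-scheme $B$ and build a natural bijection $\mathsf F^n_{r,d}(B)\simto \Hom(B,U^n_{r,d})$ as follows. A basis $\beta$ trivialises $M\cong\OO_B^{\oplus n}$; under this identification the left $A_B$-module structure on $M$ is precisely the datum of $d$ endomorphisms $T_1,\ldots,T_d\in\End_{\OO_B}(\OO_B^{\oplus n})$ (the actions of $x_1,\ldots,x_d$), while the $A_B$-linear epimorphism $p$ is determined by the sections $v_j=p(e_j)\in\Gamma(B,\OO_B^{\oplus n})$, for $e_1,\ldots,e_r$ the standard generators of $A_B^{\oplus r}$. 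Thus a triple $(M,p,\beta)$ is the same thing as a morphism $B\to\BA$. Fibrewise Nakayama reduces surjectivity of $p$ to the spanning condition $\Span(T_\bullet(b),v_\bullet(b))=V_n\otimes\bfk(b)$ at every $b\in B$, which is exactly the condition that the morphism factor through $U^n_{r,d}$. The equivalence relation on triples, which insists that $\varphi$ map $\beta$ onto $\beta'$, forces equality of the resulting tuples $(T_\bullet,v_\bullet)$, so the bijection is well-defined and clearly functorial in $B$.

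For the second claim, I would argue that the forgetful natural transformation $\mathsf F^n_{r,d}\to \mathsf G^n_{r,d}$ exhibits $\mathsf F^n_{r,d}$ as a $\GL_n$-torsor over $\mathsf G^n_{r,d}$ in the Zariski topology: any locally free $\OO_B$-module of rank $n$ admits a basis Zariski-locally on $B$, and two choices differ by a unique element of $\GL_n(\OO_B)$, whose action on the trivialised tuple is precisely the simultaneous conjugation and linear action defining the $\GL_n$-action on $U^n_{r,d}$. The functor $\mathsf G^n_{r,d}$ is already a Zariski sheaf thanks to faithfully flat descent of submodules $K\subset A_B^{\oplus r}$ with locally free cokernel, so it coincides with the sheaf quotient $\mathsf F^n_{r,d}/\GL_n$. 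On the scheme side, $\pi$ is a principal $\GL_n$-bundle by the preceding lemma, so $\Hom(-,\ncQuot^n_{r,d})$ is the Zariski sheafification of $\Hom(-,U^n_{r,d})/\GL_n$. Composing with the bijection from the first step yields the desired natural isomorphism $\Hom(-,\ncQuot^n_{r,d})\simto \mathsf G^n_{r,d}$.

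The main obstacle is a bookkeeping one rather than a technical one: matching the rigid equivalence relation on triples in $\mathsf F^n_{r,d}$ with the set-theoretic data of a point of $U^n_{r,d}$, and verifying that the $\GL_n$-action on the functor side agrees with the $\GL_n$-action on $U^n_{r,d}$ appearing in the geometric quotient. Once Nakayama handles the spanning condition and Zariski descent handles the passage from $\mathsf F$ to $\mathsf G$, no further ingredients are needed.
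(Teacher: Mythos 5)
The paper itself does not prove this proposition; it defers entirely to \cite[Theorem~2.5]{BR18} (which handles $d=3$) with the remark that the argument generalises. You have instead supplied a complete, self-contained proof, and it is correct. The structure you use — trivialise $M$ via the basis $\beta$ to turn the $A_B$-module structure into $d$ endomorphisms and the surjection $p$ into $r$ sections, reduce surjectivity to the pointwise spanning condition by Nakayama applied to the finitely generated cokernel $M/\mathrm{im}(p)$, observe that the rigid equivalence on triples collapses to equality of tuples, and then descend from $\mathsf F^n_{r,d}$ to $\mathsf G^n_{r,d}$ via the $\GL_n$-torsor structure — is the expected one and presumably mirrors what the cited reference does. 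Two small remarks: invoking ``faithfully flat descent'' to see that $\mathsf G^n_{r,d}$ is a Zariski sheaf is more than is needed (ordinary Zariski gluing of $\OO_B$-modules with the extra $A_B$-structure and the datum of the surjection suffices, since the isomorphisms involved are unique); and when you identify $\Hom(-,\ncQuot^n_{r,d})$ with the Zariski sheafification of $\Hom(-,U^n_{r,d})/\GL_n$, it is worth saying explicitly that this uses the fact that $\GL_n$ is a \emph{special} group, so that the principal $\GL_n$-bundle $\pi$ produced by Luna's slice theorem (an \emph{\'etale}-local statement) is automatically Zariski-locally trivial. Neither point is a gap; with those sentences added, your argument is a clean replacement for the paper's citation.
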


\begin{proof}
Full details in the case $d=3$ are given in \cite[Theorem 2.5]{BR18}. The proof generalises in a straightforward way to arbitrary $d$.
\end{proof}

\subsection{Quiver description}
The affine space \eqref{eqn:affine-space} can be seen as the moduli space of $(1,n)$-dimensional representations of the quiver $Q_{r,d}$, called the $r$-\emph{framed} $d$-\emph{loop quiver}, depicted in \Cref{fig:quiver11}.

\begin{figure}[H]
\centering
\begin{tikzpicture}[>=stealth,shorten >=2pt,looseness=.5,auto]
  \matrix [matrix of math nodes,
           column sep={3cm,between origins},
           row sep={3cm,between origins},
           nodes={circle, draw, minimum size=5.5mm}]
{ 
|(A)| \infty & |(B)| 0 \\         
};
\tikzstyle{every node}=[font=\small\itshape]

\node [anchor=west,right] at (-1.69,1.8) {$\bfk$};
\node [anchor=west,right] at (1.25,1.8) {$V_n$};
\node [anchor=west,right] at (-0.1,0.13) {$\vdots$};
\node [anchor=west,right] at (-0.2,0.7) {$v_1$};              
\node [anchor=west,right] at (-0.2,-0.7) {$v_r$}; 
\node [anchor=west,right] at (2,0.7) {$T_1$}; 
\node [anchor=west,right] at (2,-0.65) {$T_d$}; 
\node [anchor=west,right] at (2.1,0.13) {$\vdots$}; 

\draw (-1.45,1.6) -- (-1.45,0.45) {};
\draw (1.55,1.6) -- (1.55,0.4) {};
\draw[->] (A) to [bend left=25,looseness=1] (B) node [midway] {};
\draw[->] (A) to [bend right=25,looseness=1] (B) node [midway,below] {};
\draw[->] (B) to [out=30,in=60,looseness=8] (B) node {};
\draw[->] (B) to [out=330,in=300,looseness=8] (B) node {};
\end{tikzpicture}
\caption{The $r$-framed $d$-loop quiver $Q_{r,d}$. The vertex `$0$' (resp.~the vertex `$\infty$') carries dimension $n$ (resp.~dimension $1$).}\label{fig:quiver11}
\end{figure}
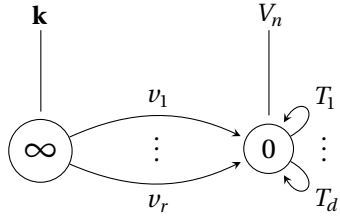

It turns out (see \cite[Prop.~2.4]{BR18} for a proof in the case $d=3$, but extending to arbitrary $d$) that $U_{r,d}^n$ agrees with the open subscheme of $\zeta_n$-stable representations, where $\zeta_n=(-n,1)$ and stability is King stability \cite{King}.

\section{Computation}\label{sec:computation}
In this section we prove \Cref{eqn:main-formula}.

The Grothendieck ring of $\bfk$-varieties $K_0(\Var_{\bfk})$ is the free abelian group on isomorphism classes $[Y]$ of $\bfk$-varieties, modulo the relations $[Y] = [Y\setminus Z]+[Z]$, for all closed subvarieties $Z\into Y$. The ring structure is determined by $[Y]\cdot [Y'] = [Y\times_{\bfk}Y']$. The main rules for `playing' with $K_0(\Var_{\bfk})$ are the following:
\begin{enumerate}
    \item [\mylabel{rule_i}{(i)}] If $Y \to B$ is a bijective morphism of $\bfk$-varieties, then $[Y] = [B]$, and
    \item [\mylabel{rule_ii}{(ii)}] If $Y \to B$ is a Zariski locally trivial fibration with fibre $F$, then $[Y] = [B] \cdot [F]$,
\end{enumerate}
There is also a Grothendieck ring of stacks (with affine geometric stabilisers) $K_0(\St_{\bfk})$. The definition is due to Ekedahl \cite{EKStacks}, as well as the proof (Theorem 1.2 in loc.~cit.) that the natural homomorphism $K_0(\Var_{\bfk})\to K_0(\St_{\bfk})$ agrees with the localisation at the multiplicative system $S\subset K_0(\Var_{\bfk})$ generated by $\BL$ and $\BL^m-1$ for $m>0$. This, in turn, is equivalent to localising at the classes of special algebraic groups, resp.~at the classes $[\GL_m]$ for all $m>0$, see \cite[Lemma 3.8]{Bri-Hall}. We take this as our working definition of $K_0(\St_{\bfk})$.

Thus, if $G$ is a special algebraic group, such as $\GL_m$, then $[G]$ becomes invertible in $K_0(\St_{\bfk})$. We then have the third crucial rule of the game:
\begin{enumerate}
    \item [\mylabel{rule_iii}{(iii)}] If $Y \to B$ is a principal $G$-bundle in the category of varieties (or schemes, or algebraic spaces), then $[B]=[Y]/[G]\in K_0(\St_{\bfk})$.
\end{enumerate}

The motivic class of the Grassmannian of $k$-planes in $V_n\cong \bfk^n$ is
\begin{equation}\label{motive_grass}
[G(k,V_n)] = [G(k,n)] = \frac{[n]_{\BL}}{[k]_{\BL}[n-k]_{\BL}},
\end{equation}
where, as in the introduction, we have set $[n]_{\BL} = \prod_{1\leq i\leq n}(\BL^i-1)$ and $\BL=[\BA^1_{\bfk}]$ is the \emph{Lefschetz motive}.

\smallbreak
Back to our calculation.  We keep the previous notation.
We have a bijective morphism
\[
\coprod_{k=0}^n Y_{r,d}^k \to \BA,
\]
where $Y_{r,d}^k \into \BA = \End_{\bfk}(V_n)^d \times V_n^{r}$ is the locally closed subscheme of points $x$ such that $\Span_{\bfk}(x) \subset V_n$ is $k$-dimensional. In particular, we have $Y_{r,d}^n = U_{r,d}^n$. 
By Rule \ref{rule_i}, this locally closed stratification induces an identity
\[
\BL^{dn^2+rn} = \sum_{k=0}^{n}\, [Y_{r,d}^k] = [U^n_{r,d}] + \sum_{k=0}^{n-1}\, [Y_{r,d}^k].
\]
The morphism to the Grassmannian
\[
\mathsf{span}_k\colon Y_{r,d}^k \to G(k,V_n)
\]
sending $x \mapsto \Span_{\bfk}(x)$ is Zariski locally trivial,\footnote{This observation, as far as we know, first appeared in the proof of Theorem 3.7 in \cite{BBS}.} and we denote by $F_{r,d}^k$ its fibre. Let $\Lambda \in G(k,V_n)$ be a point. Fix a basis of $V_n$ whose first $k$ vectors belong to $\Lambda \subset V_n$. Then, every point in the fibre $F_{r,d}^k = \mathsf{span}_k^{-1}(\Lambda)$ can be represented as a tuple $(T_1,\ldots,T_d,v_1,\ldots,v_r)$ of the form
\[
T_i = 
\begin{pmatrix}
D_i & A_i \\
0 & D'_i
\end{pmatrix}, \quad 
v_j = 
\begin{pmatrix}
w_j \\
0
\end{pmatrix}
\]
for $i=1,\ldots,d$ and $j=1,\ldots,r$, where
\begin{itemize}
    \item [$\circ$] $D_i$ is a $k\times k$ matrix,
    \item [$\circ$] $A_i$ is a $k \times (n-k)$ matrix, 
    \item [$\circ$] $D'_i$ is an $(n-k)\times (n-k)$ matrix,
    \item [$\circ$] $w_j \in V_k$ is a $k$-vector,
    \item [$\circ$] $(D_1,\ldots,D_d,w_1,\ldots,w_r)\in U_{r,d}^k \subset \End_{\bfk}(V_k)^d \times V_k^r$.
\end{itemize}
The morphism
\[
F_{r,d}^k \to U_{r,d}^k
\]
forgetting $A_1,\ldots,A_d$ and $D'_1,\ldots,D'_d$ is locally trivial with fibre
\[
\Mat_{k\times (n-k)}(\bfk)^d \times \Mat_{(n-k) \times (n-k) }(\bfk)^d \cong \BA^{dk(n-k) + d(n-k)^2} = \BA^{dn(n-k)}.
\]
Using Rule \ref{rule_ii} twice, along with \Cref{motive_grass}, we obtain
\[
[Y^k_{r,d}] = [G(k,V_n)]\cdot [F_{r,d}^k] = \frac{[n]_{\BL}}{[k]_{\BL}[n-k]_{\BL}}\cdot [U_{r,d}^k]\cdot \BL^{dn(n-k)}.
\]
We then compute (using Rule \ref{rule_iii} for the first identity)
\begin{align*}
    [\ncQuot_{r,d}^n]
\,\,&=\,\, \frac{[U_{r,d}^n]}{[\GL_n]} \\
\,\,&=\,\, \frac{1}{[\GL_n]}\left(\BL^{dn^2+rn}-\sum_{k=0}^{n-1}\,[Y^k_{r,d}]\right) \\
\,\,&=\,\, \frac{1}{[\GL_n]}\left(\BL^{dn^2+rn}-\sum_{k=0}^{n-1}\, \frac{[n]_{\BL}}{[k]_{\BL}[n-k]_{\BL}}\cdot [U_{r,d}^k]\cdot \BL^{dn(n-k)} \right) \\
\,\,&=\,\,\BL^{dn^2}\left(\frac{\BL^{rn}}{[\GL_n]}-\sum_{k=0}^{n-1}\, \frac{[U_{r,d}^k]}{[k]_{\BL}[n-k]_{\BL}\BL^{\binom{n}{2}+dkn}}\right) \\
\,\,&=\,\,\BL^{dn^2}\left(\frac{\BL^{rn}}{[\GL_n]}-\sum_{k=0}^{n-1}\, \frac{[U_{r,d}^k]}{[k]_{\BL}\displaystyle\BL^{\binom{k}{2}}[n-k]_{\BL}\displaystyle\BL^{\binom{n-k}{2}}\displaystyle\BL^{kn(d+1)-k^2}} \right) \\
\,\,&=\,\,\BL^{dn^2}\left(\frac{\BL^{rn}}{[\GL_n]}-\sum_{k=0}^{n-1}\, \frac{[\ncQuot_{r,d}^k]}{[\GL_{n-k}]}\BL^{k^2-kn(d+1)}\right).
\end{align*}
Using the relation $\binom{n}{2}-\binom{n-k}{2} = \binom{k}{2} + k(n-k)$ one easily obtains
\begin{align*}
\frac{1}{[\GL_{n-k}]} 
&= \frac{\BL^{\binom{k}{2}+k(n-k)}}{[\GL_n]}  \frac{[n]_{\BL}}{[n-k]_{\BL}} \\
&= \frac{\BL^{\binom{k}{2}+k(n-k)}}{[\GL_n]}[k]_{\BL}[G(k,n)] \\
&= \frac{\BL^{k(n-k)}}{[\GL_n]}[\GL_k][G(k,n)],
\end{align*}
from which one can write
\[
[\ncQuot^n_{r,d}] = \frac{\BL^{dn^2}}{[\GL_n]}\left(\BL^{rn} - \sum_{k=0}^{n-1}\, [\ncQuot^k_{r,d}][\GL_k][G(k,n)]\BL^{-dnk} \right).
\]
Thus \Cref{eqn:main-formula} in \Cref{thm:main-thm} is proved. The statement on functional equations will be proved in \Cref{prop:functional-eqn}.

\begin{remark}
Rearranging terms, the identity we just proved is equivalent to the relation
\[
\BL^{rn} = \sum_{k=0}^n\,[\ncQuot^k_{r,d}][\GL_k][G(k,n)]\BL^{-dnk},
\]
but unfortunately this is not giving rise to an explicit formula for the generating function of the motives $[\ncQuot^n_{r,d}]$. However, note that the right hand side turns out to be independent on $d$.
\end{remark}

We close the section with a few explicit examples.

\begin{example}
If $n=1$, we have $\ncQuot^1_{r,d} = \Quot_{\BA^d}(\OO^{\oplus r},1) =\BA^d \times \BP^{r-1} $, and so 
\[
[\ncQuot^1_{r,d}] = \BL^d\bigl(1+\BL+\cdots+\BL^{r-1}\bigr).
\]
\end{example}

\begin{example}
If $d=1$, there is no difference between commutative and noncommutative, i.e.
\[
\Quot_{\BA^1}(\OO^{\oplus r},n) = \ncQuot^n_{r,1},
\]
because there is only one matrix involved and so the commutativity condition $[T,T]=0$ is redundant.
See \cite{BFP19,ricolfi2019motive} for a formula expressing the motive of the Quot scheme of points on a smooth curve. Thus the generating series is
\begin{equation}\label{eqn:d=1}
\sum_{n\geq 0}\,[\ncQuot^n_{r,1} ] t^n = \zeta_{\BP^{r-1}}(\BL t) = \prod_{i=0}^{r-1} \frac{1}{1-\displaystyle\BL^{i+1}t},
\end{equation}
where $\zeta_Y(t) = \sum_{n\geq 0} [\Sym^n Y]t^n$ is the Kapranov motivic zeta function of a variety $Y$.
\end{example}

\begin{example}
If $d=r=1$, then \Cref{eqn:d=1} specialises to
\begin{equation}\label{eqn:d=r=1}
\sum_{n\geq 0}\,[\ncQuot^n_{1,1} ] t^n = \sum_{n\geq 0}\,\BL^nt^n = \frac{1}{1-\BL t}.
\end{equation}
Therefore our formula is equivalent to the formal identities
\begin{align*}
x^n 
&= x^{n^2}\left(\frac{x^n}{\displaystyle x^{\binom{n}{2}}[n]_x}-\sum_{k=0}^{n-1}\frac{x^k}{\displaystyle x^{\binom{n-k}{2}}[n-k]_x}x^{k^2-2kn} \right) \\
&=\frac{x^{n^2}}{x^{\binom{n}{2}}[n]_x} \left(x^n - \sum_{k=0}^{n-1} \frac{[n]_{x}}{[n-k]_{x}} x^{k-nk+\binom{k}{2}}\right),   
\end{align*}
which can be verified directly for all $n\geq 0$.
\end{example}

\section{Generating functions}
In this section we fix $\bfk = \BC$, since we rely on \cite{Reineke_2005}. For a smooth $N$-dimensional variety $M$, set $[M]_{\vir} = \BL^{-N/2}[M]$, as in \cite{BBS}. Set $N_{n,r,d} = \dim \ncQuot^{n}_{r,d} = (d-1)n^2+rn$ and consider the generating function
\[
\mathsf Z_{r,d}(t) = \sum_{n=0}^\infty\,  [\ncQuot^{n}_{r,d}]_{\vir} (\BL^{(r-1)/2}t)^n.
\]
The following proposition completes the proof of \Cref{thm:main-thm}.

\begin{prop}\label{prop:functional-eqn}
The rank $1$ series $\mathsf Z_{1,d}(t)$ satisfies the functional equation
\begin{equation}\label{eqn:Z1}
\mathsf Z_{1,d}(t) = 1+\BL^{d/2}t\cdot \prod_{i=0}^{d-1}\mathsf Z_{1,d}(\BL^it).
\end{equation}
The rank $r$ series $\mathsf Z_{r,d}(t)$ factors as
\begin{equation}\label{eqn:Zr}
\mathsf Z_{r,d}(t) = \prod_{i=0}^{r-1} \mathsf Z_{1,d}(\BL^{i}t).
\end{equation}
\end{prop}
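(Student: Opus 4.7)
The plan is to derive both identities from Reineke's Theorem~1.4 in \cite{Reineke_2005}, which establishes the corresponding functional equations at the level of Poincar\'e polynomials of $\ncQuot^n_{r,d}$. The essential ingredient for lifting that result to motives is the fact that each $\ncQuot^n_{r,d}$ admits an affine cell decomposition: a generic one-parameter subgroup of the torus scaling the $d$ loops and the $r$ framing vectors of $Q_{r,d}$ with independent weights descends to a $\BC^\times$-action on $\ncQuot^n_{r,d}$ with isolated fixed points (indexed by standard combinatorial data such as $r$-tuples of tree-like monomial sets), and the standard Bia\l{}ynicki-Birula argument then produces the cell decomposition. Consequently $[\ncQuot^n_{r,d}] \in K_0(\Var_{\BC})$ is a polynomial in $\BL$, and is recovered from the Poincar\'e polynomial via the substitution $q^2 \mapsto \BL$.

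Granted this transfer principle, the proof reduces to transcribing Reineke's identities with motivic coefficients. For \eqref{eqn:Z1}, Reineke's functional equation for the Poincar\'e series of the noncommutative Hilbert schemes $\ncHilb^n_d$, once normalised by $q^{-\dim}$, takes exactly the shape $f_d(t) = 1 + q^d t \prod_{i=0}^{d-1} f_d(q^{2i}t)$; applying $q^2 \mapsto \BL$ turns this identity into \eqref{eqn:Z1}. For \eqref{eqn:Zr}, Reineke further expresses the $r$-framed Poincar\'e series as a product of $r$ shifted copies of the rank-one series, with the geometric shifts $q^{2i}$ arising from the $r$ independent framing weights; the same substitution lifts this factorisation to \eqref{eqn:Zr}.

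The main obstacle is the cellularity input that licenses the substitution $q^2 \mapsto \BL$; this is where all the geometry enters, and it must be checked carefully since the $\ncQuot^n_{r,d}$ are only quasiprojective. Once a torus action with isolated fixed points is exhibited and the Bia\l{}ynicki-Birula decomposition is recorded, the rest is bookkeeping: one matches the $\BL^{-N_{n,r,d}/2}$ prefactor and the $\BL^{(r-1)n/2}$ weight in the definition of $\mathsf Z_{r,d}(t)$ with Reineke's normalisation conventions, after which the functional equation and the product formula lift without further work.
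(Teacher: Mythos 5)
Your proposal is correct and follows essentially the same route as the paper: quote Reineke's functional equations for the Poincar\'e series of $\ncQuot^n_{r,d}$, invoke cellularity to identify the motive with the Poincar\'e polynomial evaluated at the Lefschetz motive, and then match normalisations. The only difference is cosmetic: you sketch \emph{why} the cell decomposition exists (torus action with isolated fixed points plus Bia\l{}ynicki-Birula), whereas the paper simply records cellularity as a known fact and spends its effort on the bookkeeping step you defer; your $q^2 \mapsto \BL$ substitution corresponds to the paper's $q \mapsto \BL$ under its convention $P(M,q)=\sum_k h^k(M)\,q^{-k/2}$.
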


\begin{proof}
This is a formal consequence of \cite[Thm.~1.4]{Reineke_2005}, using the fact that the smooth varieties $\ncQuot^n_{r.d}$ admit a cell decomposition (and hence they have vanishing odd cohomology). For such varieties, the motive in $K_0(\Var_{\BC})$ is the Poincar\'e polynomial in Borel--Moore homology evaluated at the Lefschetz motive.
For a smooth $N$-dimensional variety $M$ with a cellular decomposition, the Poincar\'e polynomials
\begin{align*}
P(M,q) &= \sum_{k\geq 0} h^{k}(M,\BQ)q^{-k/2} \\
P^{\mathrm{BM}}(M,q) &= \sum_{k\geq 0}h^{\mathrm{BM}}_{k}(M,\BQ)q^{k/2}
\end{align*} 
satisfy, thanks to the Poincar\'e duality isomorphisms $\HH_k^{\mathrm{BM}}(M,\BQ) \cong \HH^{2N-k}(M,\BQ)$, the relation
\[
P(M,q) = q^{-N}P^{\mathrm{BM}}(M,q),
\]
and hence the motivic identity
\[
P(M,\BL) = \BL^{-\dim M}[M] \in K_0(\Var_{\BC})[\BL^{-1}].
\]
We now illustrate how to derive Equation \eqref{eqn:Z1} out of \cite[Thm.~1.4]{Reineke_2005}. 
Reineke considers in loc.~cit.~the generating series
\[
\overline\zeta^{(d)}_r(q,t) = \sum_{n=0}^\infty q^{(d-1)\binom{n}{2}+(r-1)n}P(\ncQuot^n_{r,d},q)t^n,
\]
and proves the identities
\begin{align*}
    \overline\zeta^{(d)}_1(q,t) &= 1+t\prod_{i=0}^{d-1}\overline\zeta^{(d)}_1(q,q^it) \\
    \overline\zeta^{(d)}_r(q,t) &= \prod_{i=0}^{r-1} \overline\zeta^{(d)}_1(q,q^it).
\end{align*}
When $r = 1$, one evaluates
\begin{align*}
\overline\zeta^{(d)}_1(\BL,t) 
&= \sum_{n=0}^\infty \,\BL^{(d-1)\binom{n}{2}-N_{n,1,d}}[\ncHilb^n_d]t^n \\
&= \sum_{n=0}^\infty\, \BL^{-N_{n,1,d}/2}[\ncHilb^n_d](\BL^{-d/2}t)^n \\
&=\sum_{n=0}^\infty\, [\ncHilb^n_d]_{\vir}(\BL^{-d/2}t)^n.
\end{align*}
Therefore
\begin{align*}
\mathsf Z_{1,d}(t) 
&= \overline\zeta^{(d)}_1(\BL,\BL^{d/2}t) \\
&=1+\BL^{d/2}t\prod_{i=0}^{d-1}\overline\zeta^{(d)}_1(\BL,\BL^{i+d/2}t) \\
&=1+\BL^{d/2}t\prod_{i=0}^{d-1}\mathsf Z_{1,d}(\BL^i t).
\end{align*}
Similarly, to confirm \Cref{eqn:Zr}, we compute
\begin{align*}
    \mathsf Z_{r,d}(t)
&= \sum_{n=0}^\infty \, \BL^{-(N_{n,r,d}+(1-r)n)/2} [\ncQuot^n_{r,d}] t^n \\
&= \sum_{n=0}^\infty \, \BL^{-N_{n,1,d}/2} [\ncQuot^n_{r,d}] t^n \\
&= \sum_{n=0}^\infty \, \BL^{(d-1)\binom{n}{2}+(r-1)n-N_{n,r,d}}[\ncQuot^n_{r,d}](\BL^{d/2}t)^n\\
&= \overline\zeta^{(d)}_r(\BL,\BL^{d/2}t)\\
&= \prod_{i=0}^{r-1}\overline\zeta^{(d)}_1(\BL,\BL^{i+d/2}t)\\
&= \prod_{i=0}^{r-1}\mathsf{Z}_{1,d}(\BL^it).\qedhere
\end{align*}
\end{proof}

The proof of \Cref{thm:main-thm} is now complete.

\begin{remark}
    The functional equation for higher rank noncommutative Quot schemes fits nicely in the philosophical framework according to which rank $r$ theories factor as products of $r$ rank $1$ theories, suitably shifted \cite{cazzaniga2020higher,Cazzaniga:2020aa,Monavari:2021tl}.
\end{remark}

\begin{remark}
Unravelling the functional equations of \Cref{prop:functional-eqn}, one finds yet \emph{another} recursion fulfilled by the motives $[\ncQuot^n_{r,d}]$ computed in \Cref{eqn:main-formula}. It would be interesting to find a geometric interpretation of such recursion.
\end{remark}

\subsection*{Acknowledgments}
We thank Markus Reineke for very helpful discussions around his previous work \cite{Reineke_2005,Engel_2008}.

\bibliographystyle{amsplain}
\bibliography{bib}

\bigskip
\bigskip
\bigskip
\noindent
Andrea T. Ricolfi \\
\address{SISSA, Via Bonomea 265, 34136, Trieste (Italy)} \\
\texttt{aricolfi@sissa.it}

\end{document}